\DeclareMathOperator{\ar}{\mathrm{Area}}
\DeclareMathOperator{\len}{\mathrm{Length}}
\begin{document}
 \bibliographystyle{plain}

 \newtheorem*{definition}{Definition}
 \newtheorem{theorem}{Theorem}
 \newtheorem{lemma}{Lemma}
 \newtheorem{corollary}{Corollary}
 \newcommand{\mc}{\mathcal}
 \newcommand{\A}{\mc{A}}
 \newcommand{\B}{\mc{B}}
 \newcommand{\F}{\mc{F}}
 \newcommand{\T}{\mc{T}}
 \newcommand{\lf}{\left\lfloor}
 \newcommand{\rf}{\right\rfloor}
 \newcommand{\rar}{\rightarrow}
 \newcommand{\mbb}{\mathbb}
 \newcommand{\R}{\mbb{R}}
 \newcommand{\N}{\mbb{N}}
 \newcommand{\Q}{\mbb{Q}}
 \newcommand{\Z}{\mbb{Z}}
 \newcommand{\Zv}{\Z^2_{\text{vis}}}
\title[Farey fractions with denominators in arithmetic progressions]{A note on Farey fractions with\\ denominators in arithmetic progressions}
\author{Dmitry A. Badziahin and Alan K. Haynes}
\subjclass[2000]{11B57} \keywords{Farey fractions}
\thanks{DB:~Research supported by EPSRC grant
EP/E061613/1.}
\thanks{AH:~Research supported by EPSRC grant EP/F027028/1.}

 \allowdisplaybreaks


\begin{abstract}
As the conclusion of a line of investigation undertaken in \cite{haynes2003}, we compute asymptotic frequencies for the values taken by numerators of differences of consecutive Farey fractions with denominators restricted to lie in arithmetic progression.
\end{abstract}


\maketitle

\section{Introduction}

For $Q\in\mathbb{N}$ the Farey fractions of order $Q$ are defined as \[\F_Q=\left\{\frac{a}{q}\in\Q : 1\leq q\leq Q, 0< a\le q,  (a,q)=1\right\},\] where $(a,q)$ denotes the greatest common divisor of $a$ and $q$. They are assumed to be ordered in the natural way on the unit interval. For each $d\in\N$ we define a subset $\F_{Q,d}\subseteq\F_Q$ by
\[\F_{Q,d}=\left\{\frac{a}{q}\in\F_Q : (q,d)=1\right\}.\]
Farey fractions play an important role in mathematics, computer science, and physics. For example there are many statements about the distribution of the Farey sequence which are known to be equivalent to the Riemann Hypothesis (\cite{franel1924},\cite{landau1924},\cite{kanemitsu1996}). Also knowledge about Farey fractions has led to significant progress in the study of billiards (\cite{boca2003}). By comparison, the restricted sets $\F_{Q,d}$ are known to have applications to the Generalized Riemann Hypothesis (\cite{huxley1971}) and to billiards in which the source is a point with non-zero rational coordinates (\cite[Introduction]{alkan2006b}).

A fundamental problem is to understand the distribution of gaps between consecutive elements of the sets $\F_{Q,d}$. Toward this end for each $k\in\N$ we define
\[N_{Q,d}(k)=\#\left\{\frac{a}{q}<\frac{a'}{q'}\text{ consecutive in } \F_{Q,d} : qa'-aq'=k\right\}.\] In this paper we prove the following theorem.
\begin{theorem}\label{mainthm}
For any $d>1$ and $k\in\N$ there exists a positive constant $c(d,k)$ for which
\begin{equation*}
N_{Q,d}(k)=c(d,k)Q^2+O_{d,k}(Q\log Q)
\end{equation*}
as $Q\rar\infty$.
\end{theorem}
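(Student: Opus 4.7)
The plan is to express $N_{Q,d}(k)$ as a sum over Farey chains in $\F_Q$ and then evaluate by elementary counting.

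\textbf{Step 1: Characterization via Farey chains.} If $a/q<a'/q'$ are consecutive in $\F_{Q,d}$ with $qa'-aq'=k$, then in the full sequence $\F_Q$ they are separated by a (possibly empty) chain of fractions $a_i/q_i$, $0\le i\le n$, with $a_0/q_0=a/q$, $a_n/q_n=a'/q'$, and each pair $a_{i-1}/q_{i-1},a_i/q_i$ consecutive in $\F_Q$; by definition of $\F_{Q,d}$ this chain lies in $\F_{Q,d}$ at its endpoints only, so $(q_i,d)>1$ for all $0<i<n$. Writing $c_i=\lf(Q+q_{i-1})/q_i\rf$ for the Farey coefficients so that $q_{i+1}=c_iq_i-q_{i-1}$ and $a_{i+1}=c_ia_i-a_{i-1}$, the quantities $D_i=q_0a_i-a_0q_i$ satisfy $D_0=0$, $D_1=1$, $D_{i+1}=c_iD_i-D_{i-1}$, and the condition $qa'-aq'=k$ becomes $D_n=k$.

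\textbf{Step 2: Parametrization.} For fixed $k$, the recursion together with $c_i\ge1$ and $D_n=k$ admit only finitely many ``shape'' tuples $\Sigma=(n;c_1,\ldots,c_{n-1})$. For each $\Sigma$, the recurrence $q_{i+1}=c_iq_i-q_{i-1}$ expresses every $q_i$ as a fixed integer linear combination of $q_0,q_1$; the divisibility conditions $(q_i,d)>1$ for $0<i<n$ together with $(q_0,d)=(q_n,d)=1$ cut out a union of residue classes of $(q_0,q_1)$ modulo some $M=M(d,k)$, while the equalities $c_i=\lf(Q+q_{i-1})/q_i\rf$ translate into a system of linear inequalities in $q_0,q_1,Q$.

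\textbf{Step 3: Counting.} For each $\Sigma$, the admissible pairs $(q_0,q_1)$ are lattice points in a polygonal subregion of $[1,Q]^2$ of area $\asymp Q^2$, further restricted to residue classes modulo $M$. A standard lattice-point estimate combined with Möbius inversion in $e\mid d$ yields a contribution of $\kappa_\Sigma Q^2+O_{d,k}(Q\log Q)$ for each shape, where the logarithmic factor comes from sums $\sum_{q\le Q}1/q$ appearing after Möbius inversion. Summing over the finitely many admissible $\Sigma$ gives $N_{Q,d}(k)=c(d,k)Q^2+O_{d,k}(Q\log Q)$; positivity of $c(d,k)$ follows already from the one-step shape ($n=1$, $k=1$) or the minimal shape yielding $D_n=k$.

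\textbf{Main obstacle.} The hardest part is Step 1: showing that the Farey-chain description captures every consecutive pair in $\F_{Q,d}$ (including those with $(q,q')=g>1$, which forces $g\mid k$), and that the set of shapes $\Sigma$ with $D_n=k$ is genuinely \emph{finite}. The finiteness requires a monotonicity estimate on the $|D_i|$ forced by the recursion $D_{i+1}=c_iD_i-D_{i-1}$ with $c_i\ge1$, ensuring that once $|D_i|$ exceeds $k$ it cannot return. Once this structural lemma -- built on the methods of \cite{haynes2003} -- is established, Steps 2 and 3 are routine bookkeeping.
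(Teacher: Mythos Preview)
Your overall plan---decompose into chains in $\F_Q$, reduce to finitely many combinatorial shapes, then count lattice points in polygonal regions with congruence constraints---is exactly the strategy of the paper. The gap is in your Step~2 (and the ``main obstacle'' paragraph): the assertion that $D_n=k$ together with $c_i\ge 1$ forces finitely many shapes $(n;c_1,\dots,c_{n-1})$ is \emph{false}, and the proposed monotonicity lemma for $|D_i|$ does not hold.

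Concretely, take $k=1$ and the shape $c_1=1,\ c_2=\cdots=c_{n-1}=2$. Then $D_0=0$, $D_1=1$, $D_2=1\cdot1-0=1$, and $D_{i+1}=2D_i-D_{i-1}=1$ for all $i\ge 2$; hence $D_n=1$ for every $n\ge 1$. These shapes are genuinely realised in $\F_Q$: starting from $\gamma_0=0/1$, the fractions $1/Q,1/(Q-1),\dots$ are consecutive in $\F_Q$ for roughly $Q/2$ steps, and $\nu_\ell(\gamma_1)=a_\ell\cdot 1-0\cdot q_\ell=1$ throughout. Similar unbounded families exist for every $k$ (e.g.\ $c_1=1$, $c_2=\cdots=c_{n-2}=2$, $c_{n-1}=k+1$ gives $D_n=k$). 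The sequence $D_i$ can also rise above $k$ and descend again (e.g.\ $c=(5,1,2,2)$ gives $D=(0,1,5,4,3,2)$), so ``once $|D_i|>k$ it cannot return'' fails as stated.

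What actually bounds $n$ is the \emph{divisibility} hypothesis $(q_i,d)>1$ for $0<i<n$, which you use only later to select residue classes. The paper proves separately (its Step~i) that any run of consecutive Farey denominators all sharing a factor with $d$ has length at most $L(d)=4d^3$; this is a nontrivial combinatorial argument about mediants and is precisely the new ingredient needed beyond the prime case of \cite{haynes2003}, where $n\le 2$ is automatic. Only after $n\le L(d)$ is established does one bound, for each fixed $n$, the finitely many admissible tuples of $2$-indices (the paper's Step~ii, via Lemma~\ref{largeindlem1} and identity~(\ref{contiden1})). Your Steps~2--3 then go through essentially as written, but you must insert this $L(d)$ argument; without it the sum over shapes is infinite and the accumulated $O(Q\log Q)$ errors are uncontrolled.
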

The case when $d$ is a prime number has been previously studied in \cite{haynes2003} and \cite{haynes2004}. When $d$ is composite the situation is somewhat more complicated. This is because when $d$ is prime at least one out of every two consecutive fractions in $\F_Q$ must also be an element of $\F_{Q,d}$ (a crucial fact in the proofs of \cite[Theorem 1]{haynes2003} and \cite[Theorem 1]{haynes2004}). However when $d$ is composite and when $Q$ is large enough there are longer sequences of consecutive fractions in $\F_Q$ which do not appear in $\F_{Q,d}$. In fact it is easy to see that by letting $Q$ and $d$ vary we can find sequences of this kind which are arbitrarily long. For a naive example of how this can happen, we could let $d=Q!$ and then take $Q$ to be as large as we like. Interestingly this appears not to have been an issue in the recent related papers \cite{alkan2006a} and \cite{alkan2006b}.

We are able to overcome the phenomenon of large gaps by using properties of the $\ell-$index of the Farey sequence, introduced and studied in \cite{haynes2008}. Let us write \[\F_Q=\{\gamma_1,\gamma_2,\ldots ,\gamma_{N(Q)}\}\] with
$1/Q=\gamma_1<\gamma_2<\cdots <\gamma_{N(Q)}=1.$ It is also natural to extend this sequence by requiring that $\gamma_{i+N(Q)}=\gamma_i+1$ for
all $i\in\Z$. Then for each $i$ we write $\gamma_i=a_i/q_i$ with
$(a_i,q_i)=1$ and $q_i>0$. Given a positive integer $\ell$ and a fraction $\gamma_i$ in $\F_Q$
the $\ell-$index of $\gamma_i$ is defined to be the quantity
\begin{equation*}
\nu_\ell(\gamma_i)=a_{i+\ell-1}q_{i-1}-a_{i-1}q_{i+\ell-1}.
\end{equation*}
To avoid confusion note that the definition of $\nu_\ell$ is dependent on $Q$. The main theorem in \cite{haynes2008} is an asymptotic formula for the average value of $\nu_\ell$, as $Q\rar\infty$. In the course of proving that theorem it was also necessary to establish algebraic identities which allow one to express the $\ell$-index as a multivariable polynomial evaluated at the values taken by the $2-$index. To be precise
defined $K_0\in\Z$ and $K_1\in\Z[x_1]$ by
\begin{align*}
K_0(\cdot )=1\quad\text{and}\quad K_1(x_1)=x_1,
\end{align*}
and then for each $\ell\ge 2$ define $K_\ell\in\Z [x_1,x_2,\ldots
,x_\ell]$
by
\begin{align*}
 K_\ell(x_1,x_2,\ldots,x_\ell)=&x_\ell K_{\ell-1}(x_1,x_2,\ldots
,x_{\ell-1})+K_{\ell-2}(x_1,x_2,\ldots ,x_{\ell-2}).
\end{align*}
These polynomials are known as the convergent polynomials, and they appear in the study of continued fractions (see \cite[Section 6.7]{graham1994}). In \cite[Theorem 1]{haynes2008} it was proved that for any $Q,\ell\in\N$ and for any $\gamma_i\in\F_Q$ we have that
\begin{equation}\label{contiden1}
\nu_\ell(\gamma_i)=\left(\frac{2\ell-1}{2}\right)K_{\ell-1}\left(-\nu_2(\gamma_i),\nu_2(\gamma_{i+1}),\ldots
,(-1)^{\ell-1}\nu_2(\gamma_{i+\ell-2})\right),
\end{equation}
where $(\frac{\cdot}{2})$ is the Kronecker symbol, defined by
\begin{equation*}
\left(\frac{\ell}{2}\right)=\begin{cases}0&\text{if }2|\ell,\\1&\text{if }
\ell\equiv\pm 1\mod 8,\text{ and}\\-1&\text{if }\ell\equiv\pm 3\mod
8.\end{cases}
\end{equation*}
The $\ell-$index tells us what happens to the numerator of the difference of the endpoints of a consecutive $(\ell+1)-$tuple in $\F_Q$, while the $2-$index has a geometrical description in terms of an area-preserving surjection of a certain region in $\R^2$. In what follows we will show how these facts combine with (\ref{contiden1}) to give us much of what we need to prove Theorem \ref{mainthm}.


\section{Notation and definitions}
In order to take advantage of identity (\ref{contiden1}) we will use a geometrical description of the Farey sequence and a convenient area preserving map. These tools have been previously used by several authors (e.g. \cite{boca2001}, \cite{boca2002}, \cite{haynes2008})

The Farey triangle $\T\subseteq[0,1]^2$ is defined by
\begin{equation*}
\T=\{(x,y)\in [0,1]^2:x+y>1\}
\end{equation*}
and the Farey map $T:[0,1]^2\rar [0,1]^2$ is defined by
\begin{equation*}
T(x,y)=\left(y,\left[\frac{1+x}{y}\right]y-x\right),
\end{equation*}
where $[x]$ denotes the greatest integer less than or equal to $x$. The map $T$ is a one-to-one area preserving transformation of $\T$ onto itself (\cite{boca2001}) and has the important property that
\begin{equation}\label{Tkform2}
T\left(\frac{q_{j-1}}{Q},\frac{q_j}{Q}\right)=\left(\frac{q_j}{Q},\frac{q_{j+1}}{Q}\right).
\end{equation}
Also for each positive integer $k$ we let
\begin{equation*}
\T_k=\left\{(x,y)\in\T:\left[\frac{1+x}{y}\right]=k\right\}.
\end{equation*}
Note that the set $\T$ is the disjoint union of the sets $\T_k$ and that
\begin{equation*}
T(x,y)=(y,ky-x)\quad\text{ for all }\quad(x,y)\in\T_k.
\end{equation*}
Also it follows from \cite[Equation 1.4]{hall2003} that given $Q$ and $i$,
\begin{equation}\label{Tkform1}
\nu_2(\gamma_i)=k~\text{ if and only if }~\left(\frac{q_{i-1}}{Q},\frac{q_i}{Q}\right)\in\T_k.
\end{equation}
In the next section we will need to know how frequently large values of the Farey index can be obtained. The following lemma answers this question by showing that if the $2-$index of a particular fraction is large, then the $2-$indices of the fractions close by must be small.
\begin{lemma}\label{largeindlem1}
If $Q\in\N$, $k\ge 1$ and $\nu_2(\gamma_i)\ge 4k+1$ then we have that
\[\nu_2(\gamma_{i\pm 1})=1~\text{ and that }~\nu_2(\gamma_{i\pm j})=2\]
for $2\le j\le k.$
\end{lemma}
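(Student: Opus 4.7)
The plan is to work with the three-term recurrence
\[
q_{i-1} + q_{i+1} = \nu_2(\gamma_i)\, q_i,
\]
which is a direct consequence of (\ref{Tkform1}) and the definition of the map $T$ on $\T_k$ (the forward iteration $q_{i+1} = k q_i - q_{i-1}$ on $\T_k$). Since this relation is symmetric in $q_{i-1}$ and $q_{i+1}$, it suffices to verify the minus-sign statements; the plus-sign analogues follow by the reversal symmetry of the Farey sequence.

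The first step is to show that $q_{i-1}$ is much larger than $q_i$. The Farey triangle inequality $(q_{i-1}/Q, q_i/Q) \in \T$ gives $q_{i+1} \leq Q < q_{i-1} + q_i$, and combining this with $\nu_2(\gamma_i)\, q_i = q_{i-1} + q_{i+1}$ yields $(\nu_2(\gamma_i) - 1)\, q_i < 2 q_{i-1}$. The hypothesis $\nu_2(\gamma_i) \geq 4k+1$ then forces $q_{i-1} > 2k\, q_i$ (and symmetrically $q_{i+1} > 2k\, q_i$).

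For the $(i{-}1)$-index, I would apply the recurrence one step back: $\nu_2(\gamma_{i-1})\, q_{i-1} = q_{i-2} + q_i \leq Q + q_i < q_{i-1} + 2 q_i$, so $\nu_2(\gamma_{i-1}) < 1 + 2 q_i / q_{i-1} < 1 + 1/k \leq 2$. Since $\nu_2(\gamma_{i-1}) \geq 1$, this forces $\nu_2(\gamma_{i-1}) = 1$, and the recurrence then gives $q_{i-2} = q_{i-1} - q_i$.

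For $2 \leq j \leq k$ I would induct on $j$, maintaining the hypothesis $q_{i-j'} = q_{i-1} - (j'-1) q_i$ for $1 \leq j' \leq j$. The lower bound $\nu_2(\gamma_{i-j}) \geq 2$ is immediate from the recurrence once one notes that $q_{i-j+1} > q_{i-j}$ in this regime. For the upper bound, the recurrence combined with $q_{i-j-1} \leq Q < q_{i-1} + q_i$ gives $\nu_2(\gamma_{i-j})\, q_{i-j} < q_{i-1} + q_i + q_{i-j+1}$; after substituting the inductive formulas, the desired bound $\nu_2(\gamma_{i-j}) < 3$ reduces to $2j\, q_i < q_{i-1}$, which is guaranteed by $q_{i-1} > 2k\, q_i \geq 2j\, q_i$. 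The recurrence then produces $q_{i-j-1} = q_{i-1} - j\, q_i > 0$ so the next step is legitimate, and the induction closes. The argument is routine once the key length estimate $q_{i-1} > 2k\, q_i$ is secured; the only real bookkeeping obstacle is keeping the three-term recursion lined up with the strict Farey-triangle inequality at each step.
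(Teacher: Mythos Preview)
Your argument is correct and is actually cleaner than the paper's. The paper proceeds by explicitly identifying the neighbours of $\gamma_i$: it takes $a'/q'<\gamma_i<a''/q''$ consecutive in $\F_{q_i}$, forms the mediants $b_m/r_m=(a'+ma_i)/(q'+mq_i)$ and $c_m/s_m=(a''+ma_i)/(q''+mq_i)$, and then, after a four-way case split on $\nu_2(\gamma_i)\bmod 4$, checks directly which of these mediants are consecutive in $\F_Q$ and reads off the required values of the $2$-index. Your approach avoids this case analysis entirely by extracting the single inequality $q_{i-1}>2k\,q_i$ from the Farey-triangle constraint and then running the three-term denominator recurrence inductively; the explicit formula $q_{i-j}=q_{i-1}-(j-1)q_i$ that falls out of your induction is in fact equivalent to the paper's mediant description. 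The paper's route has the side benefit of producing the numerators as well (and this is quoted later in Step~ii of the proof of Theorem~\ref{mainthm}), but your inductive formula yields the same data with less work, since the numerators satisfy the identical recurrence.
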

\begin{proof}
Since $4k+5=4(k+1)+1$, it is enough to verify that the theorem is true when $\nu_2(\gamma_i)=4k+j$ with $j=1,2,3,$ or $4$.

Suppose that the fractions $a'/q'<\gamma_i <a''/q''$ are consecutive in $\F_{q_i}$ and for each positive integer $m$ write
\begin{eqnarray*}
b_m=a'+ma_i,& &r_m=q'+mq_i\\
c_m=a''+ma_i,&\text{ and } &s_m=q''+mq_i.
\end{eqnarray*}
Then we have that $(b_m,r_m)=(c_m,s_m)=1$ for all $m$.

First we consider the case when $\nu_2(\gamma_i)=4k+1$. In this case we claim that the fractions
\begin{equation*}
\frac{b_k}{r_k}<\cdots <\frac{b_{2k-1}}{r_{2k-1}} <\frac{b_{2k}}{r_{2k}}<\frac{a_i}{q_i}<\frac{c_{2k}}{s_{2k}}<\frac{c_{2k-1}}{s_{2k-1}}<\cdots <\frac{c_k}{s_k}
\end{equation*}
are consecutive in $\F_Q$. To see this first let $M$ and $N$ be the largest positive integers for which $r_M,s_N\le Q$. Then we must certainly have that $r_M, s_N> Q-q_i$ and thus that $|r_M-s_N|<q_i$ and $|M-N|\le 1$. By a well known formula for the $2-$index (\cite[Equation 1.2]{hall2003}) we also have that
\[\nu_2(\gamma_i)=\frac{r_M+s_N}{q_i}=M+N+1\] and since $\nu_2(\gamma_i)=4k+1$ this implies that $M\equiv N\mod 2$ and we deduce that $M=N=2k$.

Now notice that the fractions $b_{2k}/r_{2k}<a_i/q_i$ are consecutive in $\F_{r_{2k}}$ and by the mediant property of Farey fractions the fraction with smallest denominator which lies between them is $b_{2k+1}/r_{2k+1}.$ Since we have already checked that $r_{2k+1}>Q$ this shows that $b_{2k}/r_{2k}$ and $a_i/q_i$ are consecutive in $\F_Q$. Similarly for $0\le i\le k-1$ the fractions $b_{k+i}/r_{k+i}<b_{k+i+1}/r_{k+i+1}$ are consecutive in $\F_{r_{k+i+1}}$ and the fraction with smallest denominator which lies between them is \[(b_{k+i}+b_{k+i+1})/(r_{k+i}+r_{k+i+1}).\]
Since \[r_{k+i}+r_{k+i+1}=2q'+(2(k+i)+1)q_i>r_{2k+1}>Q\] this shows that $b_{k+i}/r_{k+i}$ and $b_{k+i+1}/r_{k+i+1}$ are consecutive in $\F_Q$. The analysis of the fractions to the right of $a_i/q_i$ is identical and this verifies our above claim. The rest of the proof in this case follows from a straightforward calculation.

In the case when $\nu_2(\gamma_i)=4k+2$ we apply the same arguments to deduce that either the fractions
\begin{equation*}
\frac{b_{k+1}}{r_{k+1}}<\cdots <\frac{b_{2k}}{r_{2k}} <\frac{b_{2k+1}}{r_{2k+1}}<\frac{a_i}{q_i}<\frac{c_{2k}}{s_{2k}}<\frac{c_{2k-1}}{s_{2k-1}}<\cdots <\frac{c_k}{s_k}
\end{equation*}
or the fractions
\begin{equation*}
\frac{b_k}{r_k}<\cdots <\frac{b_{2k-1}}{r_{2k-1}} <\frac{b_{2k}}{r_{2k}}<\frac{a_i}{q_i}<\frac{c_{2k+1}}{s_{2k+1}}<\frac{c_{2k}}{s_{2k}}<\cdots <\frac{c_{k+1}}{s_{k+1}}
\end{equation*}
are consecutive in $\F_Q$. Either way the lemma is verified, and the two remaining cases are dealt with in the same manner.
\end{proof}
We will also need estimates for the number of lattice points in a convex region satisfying certain congruence constraints.
\begin{lemma}\label{latptlem1}
Suppose that $\Omega$ is a convex region contained in the square $[0,Q]\times [0,Q]$, that $\A$ and $\B$ are subsets of $\{1,\ldots ,d\}$, and that each element $a$ of $\A$ satisfies $(a,d)=1$. Then for the quantity
\[Z_{\A,\B,d}(\Omega )=\#\{(m,n)\in\Omega\cap\Zv : (m,n)\equiv (a,b)\mod d\text{ for some }(a,b)\in\A\times\B\}\]
we have the estimate
\begin{equation*}
Z_{\A,\B,d}(\Omega )=\frac{6\ar (\Omega)|\A||\B|}{\pi^2d^2}\prod_{p|d}\left(1-\frac{1}{p^2}\right)^{-1}+O\left(\frac{\ar (\Omega)}{Q}+\len (\partial\Omega)\log Q\right),
\end{equation*}
where the product here is taken over primes dividing $d$.
\end{lemma}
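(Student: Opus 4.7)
The plan is to remove the visibility condition by Möbius inversion, then use the Chinese Remainder Theorem to merge the divisibility and residue-class conditions, and finally apply the standard lattice-point estimate for convex planar regions. The main term will arise from the Euler product identity
\begin{equation*}
\sum_{\substack{e \ge 1 \\ (e,d)=1}}\frac{\mu(e)}{e^2} = \frac{6}{\pi^2}\prod_{p \mid d}\left(1 - \frac{1}{p^2}\right)^{-1},
\end{equation*}
which matches the constant in the lemma exactly.

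First I would decompose $Z_{\A,\B,d}(\Omega)$ by residue class, writing
\begin{equation*}
Z_{\A,\B,d}(\Omega) = \sum_{(a,b) \in \A \times \B} \#\{(m,n) \in \Omega \cap \Z^2 : (m,n) \equiv (a,b) \pmod d,\ \gcd(m,n)=1\},
\end{equation*}
then insert $[\gcd(m,n)=1] = \sum_{e \mid m,\,e \mid n}\mu(e)$ and interchange sums. Because $\Omega \subseteq [0,Q]^2$, only $e \le Q$ contribute. For each fixed $(a,b)$ and $e$, the conditions $e \mid m$ and $m \equiv a \pmod d$ (with $(a,d)=1$) are simultaneously solvable if and only if $(e,d)=1$, in which case the Chinese Remainder Theorem collapses them into a single residue class modulo $ed$, and similarly for $n$.

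The inner count then becomes a lattice-point count for $\Omega$ in a single residue class modulo $ed$. Using the standard fact that a convex region $R$ contains $\ar(R)/D^2 + O(\len(\partial R)/D + 1)$ lattice points in any fixed residue class modulo $D$ (obtained by the affine substitution $m = a + Du$, $n = b + Dv$ followed by the classical convex-body estimate), the inner count equals $\ar(\Omega)/(ed)^2 + O(\len(\partial\Omega)/(ed) + 1)$. Weighting by $\mu(e)$, summing over $e \le Q$ with $(e,d)=1$, and then summing over the $|\A||\B|$ pairs $(a,b)$, the main term becomes
\begin{equation*}
\frac{|\A||\B|\,\ar(\Omega)}{d^2}\sum_{\substack{e \le Q \\ (e,d)=1}} \frac{\mu(e)}{e^2},
\end{equation*}
and extending the truncated sum to infinity at cost $O(\ar(\Omega)/(d^2Q))$ produces the stated main term via the Euler product above.

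The error terms from the $O(\len(\partial\Omega)/(ed) + 1)$ pieces, summed over $e \le Q$ and over the $|\A||\B|$ choices of $(a,b)$, contribute $O(|\A||\B|\len(\partial\Omega)\log Q/d + |\A||\B|Q)$, and since $|\A|, |\B| \le d$ with $d$ treated as fixed, this collapses to $O(\len(\partial\Omega)\log Q + Q)$. The main obstacle I anticipate is cleanly absorbing the residual $O(Q)$ piece into the stated error $O(\ar(\Omega)/Q + \len(\partial\Omega)\log Q)$: for $\Omega$ of full-dimensional extent one uses $Q \ll \len(\partial\Omega)\log Q$, while for degenerate or tiny $\Omega$ both sides are already $O(\ar(\Omega)/Q + 1)$ and the estimate is essentially trivial. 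Once this bookkeeping is in place, the identification of the Euler product constant and the evaluation of the main term are routine.
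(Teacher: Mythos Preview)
Your approach is essentially identical to the paper's: M\"obius inversion to remove the coprimality condition, the observation that $(a,d)=1$ for $a\in\A$ kills all terms with $(e,d)>1$, CRT to merge the congruence and divisibility conditions into a single residue class modulo $de$, the standard convex lattice-point estimate, and finally the Euler-product evaluation of the truncated sum $\sum_{(e,d)=1}\mu(e)/e^2$. The only organizational difference is that you split over individual pairs $(a,b)\in\A\times\B$ first, whereas the paper keeps the full product $\A\times\B$ together throughout; this is cosmetic. Your concern about the residual $O(Q)$ term from the $+1$ in the lattice-point estimate is legitimate and is in fact a detail the paper's proof glosses over; your proposed resolution (absorbing it into $\len(\partial\Omega)\log Q$ when $\Omega$ has diameter comparable to $Q$, which is the only case needed in the application) is the right one.
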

\begin{proof}
First of all by M\"{o}bius inversion we have that
\begin{align*}
Z_{\A,\B,d}(\Omega)&=\sum_{\substack{(m,n)\in\Omega\\(m,n)\mod d\in\A\times\B}}\sum_{e|m,n}\mu (e)\\
&=\sum_{e\le Q}\mu (e)\sum_{\substack{(m,n)\in\Omega,~e|m,n\\(m,n)\mod d\in\A\times\B}}1.
\end{align*}
Since the residue classes in $\A$ are coprime to $d$ the inner sum on the right hand side here will be zero if $(e,d)>1$. Thus the right hand side is equal to
\begin{align*}
\sum_{\substack{e\le Q\\(e,d)=1}}\mu (e)\sum_{\substack{(m,n)\in\Omega\\(m,n)\mod de\in\A_e\times\B_e}}1,
\end{align*}
where $\A_e,\B_e\subseteq\{1,\ldots ,de\}$ are uniquely determined (i.e. by the Chinese Remainder Theorem) and satisfy $|\A_e|=|\A|$ and $|\B_e|=|\B|$. Now by using a standard estimate to count the lattice points in the innermost sum we find that the above expression equals
\begin{align*}
&\sum_{\substack{e\le Q\\(e,d)=1}}\mu (e)\left(\frac{\ar (\Omega)|\A||\B|}{(de)^2}+O\left(\frac{\len (\partial (\Omega))\max\{|\A|,|\B|\}}{de}\right)\right)\\
&\quad =\frac{\ar (\Omega)|\A||\B|}{d^2}\sum_{\substack{e\le Q\\(e,d)=1}}\frac{\mu (e)}{e^2}+O\left(\len (\partial (\Omega))\log Q\right)\\
&\quad =\frac{6\ar (\Omega)|\A||\B|}{\pi^2d^2}\prod_{p|d}\left(1-\frac{1}{p^2}\right)^{-1}+O\left(\frac{\ar (\Omega)}{Q}+\len (\partial\Omega)\log Q\right).
\end{align*}
The final equality here comes from the fact that
\begin{align*}
\sum_{\substack{e\le Q\\(e,d)=1}}\frac{\mu (e)}{e^2}=\sum_{e=1}^\infty\frac{\mu (e)}{e^2}+O\left(\frac{1}{Q}\right)=\zeta(2)^{-1}\prod_{p|d}\left(1-\frac{1}{p^2}\right)^{-1}+O\left(\frac{1}{Q}\right).
\end{align*}
\end{proof}


\section{Proof of Theorem \ref{mainthm}}
For each positive integer $\ell$ let us define
\begin{align*}
N_{Q,d}(k,\ell)=\#\{&1\le i\le N(Q) : (q_{i-1},d)=(q_{i+\ell-1},d)=1,\\
&\qquad (q_{i+j-1},d)>1 \text{ for } 1\le j<\ell, ~\nu_\ell(\gamma_i)=k\}.
\end{align*}
Then it is clear that
\begin{equation}\label{Nksum1}
N_{Q,d}(k)=\sum_{\ell=1}^{N(Q)-1}N_{Q,d}(k,\ell).
\end{equation}
Our proof will consist of the following three steps:
\begin{itemize}
\item[i.] We will show that there is an integer $L=L(d)$ such that for any $Q$ and $i$ we have $(q_{i+j-1},d)=1$ for some $1\le j\le L$.
\item[ii.] We will show that for each $\ell$ and $k$ there is an integer $K=K(\ell ,k)$ such that if $\nu_\ell(\gamma_i)=k$ for some $Q$ and $i$ then the $(\ell-1)-$tuple $(\nu_2(\gamma_i),\ldots , \nu_2(\gamma_{i+\ell-2}))$ must take one of at most $K^{\ell-1}$ possible values.
\item[iii.] We will use the information gathered in the first two steps to estimate (\ref{Nksum1}) by counting visible lattice points in subregions of the Farey triangle which satisfy our congruence constraints.
\end{itemize}
\subsection*{Step i}
First suppose that $Q$ and $i_1<i_2$ are chosen so that
\begin{equation}\label{consecfrac1}
\max\{q_{i_1},q_{i_2}\}\le q_j ~\text{ and  }~ (q_j,d)>1 ~\text{ for all }~ i_1\le j\le i_2.
\end{equation}
It follows easily from this that $\gamma_{i_1}$ and $\gamma_{i_2}$ are consecutive in the Farey fractions of order $\max\{q_{i_1},q_{i_2}\}$. Thus by the mediant property of the Farey sequence we have that
\begin{equation}\label{consecfrac2}
\{q_j:i_1<j<i_2\}=\{mq_{i_1}+nq_{i_2}:m,n\in\N, (m,n)=1, mq_{i_1}+nq_{i_2}\le Q\}.
\end{equation}
Now let $d_1$ be the largest divisor of $d$ which is coprime to $q_{i_1}$ and choose $m_1\in\N$ so that $m_1\le d_1$ and \[m_1\equiv (1-q_{i_2})q_{i_1}^{-1}\mod d_1.\] Then for any prime $p$ which divides $d_1$ we have that $p\nmid m_1q_{i_1}+q_{i_2}$. On the other hand since $\gamma_{i_1}$ and $\gamma_{i_2}$ are consecutive Farey fractions of some order we have that $(q_{i_1},q_{i_2})=1$ and so if $p|(d/d_1)$ then it follows that $p\nmid q_{i_2}$. This shows that $(m_1q_{i_1}+q_{i_2},d)=1$ and it follows from (\ref{consecfrac1}) and (\ref{consecfrac2}) that $Q<m_1q_{i_1}+q_{i_2}\le dq_{i_1}+q_{i_2}$. By repeating the same argument we also have that $Q<q_{i_1}+dq_{i_2}$ and so
\begin{equation}\label{consecfrac3}
\{q_j:i_1<j<i_2\}\subseteq \{mq_{i_1}+nq_{i_2}:1\le m,n< d\}.
\end{equation}
From this it is apparent that if (\ref{consecfrac1}) is satisfied then we must have $i_2-i_1< d^2$.

Now suppose more generally that $Q, i,$ and $L_0$ are chosen so that
\[(q_{i+j-1},d)>1~\text{ for all }~1\le j\le L_0.\]
Choose an integer $i_0\in\{i,\ldots ,i+L_0-1\}$ such that
\[q_{i_0}=\min\{q_j:i\le j\le i+L_0-1\}.\]
This integer uniquely determines a pair of non-negative integers $n$ and $n'$ and a finite sequence $i_{-n}<\cdots<i_0<\cdots <i_{n'}$ with the properties that $i_{-n}=i, i_{n'}=i+L_0-1,$
\begin{align*}
q_{i_m}&=\min\{q_j:i_{m-1}< j\le i+L_0-1\}~\text{ for }1\le m\le n',\\
q_{i_m}&=\min\{q_j:i\le j\le i_{m+1}\}~\text{ for }-n\le m\le -1,
\end{align*}
and
\begin{equation*}
\max\{q_{i_m},q_{i_{m+1}}\}< q_j~\text{ for all }~-n\le m<n'~\text{ and }~i_m< j< i_{m+1}.
\end{equation*}
Our argument from above shows that $i_{m+1}-i_m<d^2$ for each $-n\le m<n'$, so all that remains is to bound $n$ and $n'$ in terms of $d$.

If $q_{i_m}$ were equal to $q_{i_{m+1}}$ for some $m$ then there would have to be a fraction with smaller denominator between $a_{i_m}/q_{i_m}$ and $a_{i_{m+1}}/q_{i_{m+1}}$. Since our setup does not allow this we must have that
\[q_{i_0}<q_{i_{-1}}<\cdots <q_{i_{-n}}~\text{ and }~q_{i_0}< q_{i_1}<\cdots< q_{i_{n'}}.\]
Now for each integer $0\le m< n'$ let $b_m/r_m$ be the fraction which immediately follows $a_{i_m}/q_{i_m}$ in the Farey fractions of order $q_{i_{m+1}}-1$. It is evident that $r_0\le r_1\le\cdots \le r_{n'-1}$ and that $q_{i_{m+1}}=q_{i_m}+r_m$ for each $0\le m<n'$. Furthermore it is not difficult to see that if $r_m>r_{m-1}$ for some $m$ then $r_m>q_{i_m}$. Suppose that $r_0=r_m$ for all $m< m'$. Then we have that \[q_{i_{m'}}=q_{i_0}+m'r_0\]
and making use of the argument used to prove (\ref{consecfrac3}) we find that $m'<d$. Thus if $n'> d$ then for some $1\le m\le d$ we must have that $r_m>r_{m-1}$. Then since $r_m>q_{i_m}> q_{i_0}$ and \[q_{i_{n'}}\ge q_{i_m}+(n'-m)r_m\]
we have that
\[q_{i_{n'}}> q_{i_0}+(n'-m)q_{i_1}.\]
Again by the argument used to prove (\ref{consecfrac3}) we have that $n'-m<d$ which allows us to conclude that $n'<2d$. Similarly we find that $n<2d$ and putting these results together shows that $L_0<L(d)=4d^3$.

\subsection*{Step ii}
In this step we are supposing that $\ell$ and $k$ are fixed and we want to show that if $\nu_\ell(\gamma_i)=k$ for some $Q$ and $i$ then there are only finitely many possible values for the $(\ell-1)-$tuple of integers $(\nu_2(\gamma_i),\ldots , \nu_2(\gamma_{i+\ell-2}))$.

First suppose that $K_0>\ell$ and that $\nu_2(\gamma_{i_0})=4K_0+1$ for some $i\le i_0\le i+\ell-2$. Then by Lemma \ref{largeindlem1} we know that $\nu_2(\gamma_{i_0\pm 1})=1$ and that $\nu_2(\gamma_{i_0\pm j})=2$ for $2\le j\le\ell$. Furthermore following the proof of the lemma if we suppose that \[\frac{a'}{q'}<\frac{a_{i_0}}{q_{i_0}}<\frac{a''}{q''}\] are consecutive in $\F_{q_{i_0}}$ then it is easy to see that
\[\gamma_{i-1}=\frac{a'+(2K_0-(i_0-i))a_{i_0}}{q'+(2K_0-(i_0-i))q_{i_0}}\]
and that \[\gamma_{i+\ell-1}=\frac{a''+(2K_0-(i+\ell-2-i_0))a_{i_0}}{q''+(2K_0-(i+\ell-2-i_0))q_{i_0}}.\]
Using the definition of $\nu_\ell$ we have that
\begin{equation}\label{indeqn1}
\nu_{\ell}(\gamma_i)=2K_0(q_{i_0}a''+a_{i_0}q'-q_{i_0}a'-a_{i_0}q'')+C=4K_0+C,
\end{equation}
where the constant $C$ here is an integer which depends only on $a_{i_0},q_{i_0},$ and $i$.

We know from identity (\ref{contiden1}) that $\nu_{\ell}(\gamma_i)$ is a linear function of each of the variables $\nu_2(\gamma_{i+j}),~0\le j\le \ell-2$, and this together with Lemma \ref{largeindlem1} allows us to interpolate the values of $\nu_2(\gamma_{i_0})$ between integers of the form $4K_0+1$. Since the right hand side of (\ref{indeqn1}) tends to infinity with $K_0$ this allows us to conclude that there is an integer $K$ for which $\nu_\ell(\gamma_i)>k$ whenever $\nu_2(\gamma_{i_0})\ge K$ for some $i\le i_0\le i+\ell-2$.

Notice that the right hand side of identity (\ref{contiden1}) can be evaluated just by knowing the value of the $(\ell-1)-$tuple $(\nu_2(\gamma_i),\ldots ,\nu_2(\gamma_{i+\ell-2}))$. Once these integers have been determined no further information is needed about $Q$ or $i$ in order to specify the value of $\nu_{\ell}(\gamma_i)$. Thus the integer $K$ which we found above can be chosen so that it depends only on $\ell$ and $k$. This allows us to conclude that for any $Q$ and $i$ if $\nu_\ell(\gamma_i)=k$ then the number of possibilities for the $(\ell-1)-$tuple $(\nu_2(\gamma_i),\ldots , \nu_2(\gamma_{i+\ell-2}))$ is at most $K^{\ell-1}$.
\subsection*{Step iii} Let us assume that $Q$ is large enough that $N(Q)>L(d)$. Then (\ref{Nksum1}) may be replaced by
\[N_{Q,d}(k)=\sum_{\ell=1}^{L}N_{Q,d}(k,\ell).\]
For each $1\le \ell\le L$ let $\{\vec{x}(\ell,m)\}_{1\le m\le n(\ell)}$ be the collection of possible $(\ell-1)-$tuples of integers from Step ii which can appear as the $2-$indices corresponding to an $\ell-$index which takes the value $k$. Then we can write $N_{Q,d}(k,\ell)$ as
\begin{align*}
\sum_{m=1}^{n(\ell)}\#\{1\le i\le N(Q) :& (q_{i-1},d)=(q_{i+\ell-1},d)=1,(q_{i+j-1},d)>1 \text{ for } 1\le j<\ell,\\
&(\nu_2(\gamma_i),\ldots ,\nu_2(\gamma_{i+\ell-2}))=\vec{x}(\ell,m)\}.
\end{align*}
If we write $\vec{x}(\ell,m)=(x_1(\ell,m),\ldots ,x_{\ell-1}(\ell,m))$ then if $(\nu_2(\gamma_i),\ldots ,\nu_2(\gamma_{i+\ell-2}))=\vec{x}(\ell,m)$ for some $i$ we have that
\begin{align*}
q_{i+j-1}=x_{j-1}(\ell,m)q_{i+j-2}-q_{i+j-3}
\end{align*}
for each $2\le j\le \ell$. Thus each $(\ell-1)-$tuple $\vec{x}(\ell,m)$ determines a pair of collections of residue classes $\A(\ell,m)$ and $\B(\ell,m)$ modulo $d$ with the property that
\[(q_{i-1},d)=(q_{i+\ell-1},d)=1,(q_{i+j-1},d)>1~\text{ for }~1\le j<\ell,\]
\[\text{ and }~(\nu_2(\gamma_i),\ldots ,\nu_2(\gamma_{i+\ell-2}))=\vec{x}(\ell,m)\]
if and only if
\[(q_{i-1},q_i)\mod d\in \A(\ell,m)\times\B(\ell,m),~\text{ and }~(\nu_2(\gamma_i),\ldots ,\nu_2(\gamma_{i+\ell-2}))=\vec{x}(\ell,m).\]
Furthermore it is obvious that $(a,d)=1$ for all $a\in\A(\ell ,m)$. Next using (\ref{Tkform1}) and (\ref{Tkform2}) we have that
\begin{equation*}
(\nu_2(\gamma_i),\ldots ,\nu_2(\gamma_{i+\ell-2}))=\vec{x}(\ell,m)
\end{equation*}
if and only if
\begin{equation*}
(q_{i-1},q_i)\in Q(\T_{x_1}\cap T^{-1}\T_{x_2}\cap\cdots \cap T^{-(\ell-2)}\T_{x_{\ell-1}})\cap\Zv.
\end{equation*}
Here we are implicitly using the well known fact that if $(a,b)\in Q\T\cap\Zv$ then $a=q_{i-1}$ and $b=q_i$ for some $i$. Putting all this together gives us the formula
\begin{align*}
N_{Q,d}(k)=\sum_{\ell=1}^{L}\sum_{m=1}^{n(\ell)}Z_{\A(\ell,m),\B(\ell,m),d}(Q(\T_{x_1}\cap \cdots \cap T^{-(\ell-2)}\T_{x_{\ell-1}})),
\end{align*}
where for ease of notation we have suppressed the dependence on $\ell$ and $m$. It is easy to check that each of the regions $Q(\T_{x_1}\cap T^{-1}\T_{x_2}\cap\cdots \cap T^{-(\ell-2)}\T_{x_{\ell-1}})$ is convex and so may apply Lemma \ref{latptlem1} to obtain the statement of the theorem with $c(d,k)$ equal to
\begin{align*}
\frac{6}{\pi^2d^2}\prod_{p|d}\left(1-\frac{1}{p^2}\right)^{-1}\sum_{\ell=1}^{L}\sum_{m=1}^{n(\ell)}|\A(\ell,m)||\B(\ell,m)|\ar (\T_{x_1}\cap \cdots \cap T^{-(\ell-2)}\T_{x_{\ell-1}}).
\end{align*}
Finally we note that some of the sets $\A(\ell,m)$ and $\B(\ell,m)$ and some of the regions $\T_{x_1}\cap T^{-1}\T_{x_2}\cap\cdots \cap T^{-(\ell-2)}\T_{x_{\ell-1}}$ may be empty, so to finish the proof we must show that $c(d,k)$ is never zero. To see this note that the set $\T_k$ is always non-empty. By Lemma \ref{latptlem1} if $Q$ is large enough there will be on the order of $Q^2$ lattice points $(a,b)\in Q\T_k\cap\Zv$ which satisfies $a\equiv 1\mod d$ and $b\equiv 0\mod d$. For each of these points we will have that $(kb-a,d)=1$ and they must therefore correspond to fractions $\gamma_i\in\F_Q$ with $(q_{i-1},d)=(q_{i+1},d)=1, (q_i,d)>1$, and $\nu_2(\gamma_i)=k$. Thus for $Q$ large enough $N_{Q,d}(k)$ is at least as large as a constant times $Q^2$. This precludes the possibility of having $c(d,k)$ equal to zero, and it completes our proof of Theorem \ref{mainthm}.

\vspace{10mm}

\noindent Dmitry A. Badziahin: Department of Mathematics, University of
York,

\vspace{0mm}

\noindent\phantom{Dmitry A. Badziahin: }Heslington, York, YO10 5DD,
England.


\noindent\phantom{Dmitry A. Badziahin: }e-mail: db528@york.ac.uk

\vspace{5mm}

\noindent Alan K. Haynes: Department of Mathematics, University of
York,

\vspace{0mm}

\noindent\phantom{Alan K. Haynes: }Heslington, York, YO10 5DD,
England.


\noindent\phantom{Alan K. Haynes: }e-mail: akh502@york.ac.uk

\vspace{5mm}

\end{document}